\title[On Poonen's conjecture]{On Poonen's conjecture concerning rational preperiodic points of quadratic maps}
\author{Benjamin Hutz and Patrick Ingram$^\dag$}
\address{Department of Mathematics and Computer Science, Amherst College, Amherst, MA 01002}
\email{bhutz@amherst.edu}
\address{Department of Pure Mathematics, University of Waterloo, Waterloo, Ontario, Canada N2L 3G1}
\email{pingram@math.uwaterloo.ca}
\date{\today}
\thanks{$^\dag$ The work of the second author was supported in part by a grant from NSERC of Canada.}
\newcommand{\QQ}{\mathbb{Q}}
\newcommand{\ZZ}{\mathbb{Z}}
\newcommand{\PP}{\mathbb{P}}
\renewcommand{\AA}{\mathbb{A}}
\newcommand{\Ocal}{\mathcal{O}}
\newcommand{\Gal}{\operatorname{Gal}}
\newcommand{\lcm}{\operatorname{lcm}}
\newcommand{\PosPer}{\operatorname{PosPer}}
\newcommand{\MOD}[1]{~(\textup{mod}~#1)}
\renewcommand{\phi}{\varphi}
\newcommand{\pf}{\mathfrak{p}}
\providecommand{\abs}[1]{\left\lvert#1\right\rvert}
\newcommand{\col}{\,{:}\,}
\newtheorem{proposition}{Proposition}
\newtheorem{lemma}{Lemma}
\newtheorem{conj}{Conjecture}
\theoremstyle{remark}
\newtheorem*{remark}{Remark}
\newtheorem*{example}{Example}
\begin{document}
\maketitle

\begin{abstract}
    The purpose of this note is to give some evidence in support of conjectures of Poonen and Morton and Silverman on the periods of rational numbers under the iteration of quadratic polynomials. In particular, for the family of maps $f_c(x) = x^2 + c$ for $c \in \QQ$, Poonen conjectured that the exact period of a $\QQ$-rational periodic points is at most $3$.  Using good reduction information, we verify this conjecture over $\QQ$ for $c$ values up to height $10^8$. For $K/\QQ$ a quadratic number field, we provide evidence that the upper bound on the exact period of a $\QQ$-rational periodic point is $6$.  We also show that the largest exact period of a rational periodic point increases at least linearly in the degree of the number field.
\end{abstract}

\section{Introduction}
The purpose of this note is to give some evidence in support of conjectures of Poonen and Morton and Silverman on the periods of rational numbers under the iteration of quadratic polynomials.
Suppose that $\phi_c(z)=z^2+c$, where $c\in\QQ$.  We will say that $\alpha\in\PP^1(\QQ)$ is a periodic point with exact period $n$ for $\phi_c$ if $\phi_c^n(\alpha)=\alpha$, while $\phi_c^m(\alpha)\neq\alpha$ for $0<m<n$.  For example, the point at infinity is a point with exact period $1$ for $\phi_c$, as are
$$\alpha=\frac{1}{2}\pm\frac{\sqrt{1-4c}}{2}$$
in the case where $1-4c$ is a rational square.  It is equally easy to construct infinite families of quadratic polynomials $\phi_c$ with points with (exact) period 2 or 3, but there are no such polynomials with points of period 4, 5, or (assuming certain conjectures) 6 \cite{fps, morton, stoll}.  It is reasonable to ask for which $N$ there exists a pair $\alpha, c\in\QQ$ such that $\alpha$ is a point with exact period $N$ for $\phi_c$ (or how many rational periodic points a quadratic map defined over $\QQ$ may have, in total; the questions are explicitly related \cite{poonen}).

Let $E/\QQ$ be an elliptic curve, and let $[\ell]:E\rightarrow E$ be the multiplication-by-$\ell$ map, for some prime $\ell$.  Then the preperiodic points for $[\ell]$ (that is, the points $P$ such that $[\ell]^m(P)$ is periodic, for some $m$) are precisely the torsion points on $E$.  A theorem of Mazur \cite{mazur} demonstrates that, for any given elliptic curve $E/\QQ$, a point of finite order in $E(\QQ)$ has order at most 12.  For a fixed prime $\ell$, this bounds the period of a rational periodic point of $[\ell]:E\rightarrow E$ in terms of $\ell$, but independent of $E$.  Poonen conjectured a similar bounded for the (exact) period of periodic points for quadratic polynomials.

\begin{conj}[Poonen \cite{poonen}]\label{poonen's conjecture}
There is no $c\in\QQ$ such that $\phi(z)=z^2+c$ has a $\QQ$-rational periodic point with exact period greater than 3.
\end{conj}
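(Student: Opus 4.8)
The plan is to translate Conjecture \ref{poonen's conjecture} into a statement about rational points on dynatomic curves. For $n\geq 1$ let $\Phi_n(x,c)=\prod_{d\mid n}(\phi_c^d(x)-x)^{\mu(n/d)}\in\ZZ[x,c]$ be the $n$-th dynatomic polynomial of the family $\phi_c(z)=z^2+c$; the affine curve $Y_1(n)\col\Phi_n(x,c)=0$ parametrizes, away from finitely many degenerate points, the pairs $(x,c)$ for which $x$ has exact period $n$ under $\phi_c$, and we write $X_1(n)$ for its smooth projective model. A short computation with this product formula shows that $X_1(1)$, $X_1(2)$, $X_1(3)$ have genus $0$ (matching the infinitely many $c$ admitting points of period $1,2,3$), while the genus of $X_1(n)$ grows without bound. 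Hence for each fixed $n\geq 4$ Faltings' theorem already gives $\#X_1(n)(\QQ)<\infty$, and Conjecture \ref{poonen's conjecture} is precisely the claim that for every $n\geq 4$ the only rational points of $X_1(n)$ are the cusps and the points at which the period degenerates.

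For small $n$ this can be carried out by hand. The curve $X_1(4)$ has genus $2$, and an explicit Chabauty--Coleman argument on its Jacobian shows there are no non-degenerate rational points; together with the analogous analysis of $X_1(5)$ (genus $14$), this settles $n=4$ and $n=5$ unconditionally (Morton \cite{morton}; Flynn--Poonen--Schaefer \cite{fps}), while Stoll \cite{stoll} disposes of $n=6$ conditionally on standard hypotheses for the relevant Jacobian. Granting those hypotheses, the conjecture holds for $4\leq n\leq 6$, and the substance of the problem is the range $n\geq 7$.

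Here the obstruction is genuinely one of uniformity: no single curve governs all of these $n$ at once. For a fixed $n\geq 7$ I would deploy the same machinery --- classical or quadratic Chabauty together with a Mordell--Weil sieve on a suitable quotient of $X_1(n)$ --- but this requires an input of the shape $\rank \operatorname{Jac}(X_1(n))(\QQ)<g(X_1(n))$ (or the analogous inequality for the quotient used), and at present neither a bound on, nor a feasible computation of, these Mordell--Weil ranks is available once $n$ is even moderately large. The set of possible periods can be thinned by the good-reduction method underlying the cited theorems: if $\phi_c$ has good reduction at a prime $p$ and $x$ has exact period $n$ for $\phi_c$, then the reduction $\bar x$ has some exact period $m$ for $\bar\phi_c$ and $n\in\{m,\ mr,\ mrp^e\}$ for some $e\geq 1$, where $r$ is the multiplicative order of the multiplier $(\phi_c^m)'(\bar x)$; intersecting such congruence conditions over several primes of good reduction excludes many residue classes of $n$, but never all of them, so this device alone cannot close the argument.

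Thus the heart of the matter is upgrading ``for each $n\geq 4$ there is a finite, in-principle-decidable computation'' to a single argument valid for all $n$. A proof of Conjecture \ref{poonen's conjecture} appears to require either a uniform Chabauty-type estimate (for instance, a uniform bound on $\rank\operatorname{Jac}(X_1(n))(\QQ)$) across the family $\{X_1(n)\}_{n\geq 4}$, or the full uniform boundedness conjecture of Morton and Silverman, from which a bound on periods follows immediately. I expect this uniformity step to be the decisive obstacle, which is exactly why the present note confines itself to gathering computational evidence.
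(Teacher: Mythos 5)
The statement you were asked about is a conjecture, and the paper neither proves it nor claims to: its contribution is computational evidence, namely Proposition~\ref{prop:poonen}, obtained by the good-reduction sieve of Lemma~\ref{periods} (intersecting sets of possible periods over many primes, organized by $S$-types) for every $c\in\QQ$ of height at most $8\log 10$. Your write-up correctly recognizes that no proof is currently possible and instead surveys the global route: dynatomic curves $X_1(n)$, Faltings for each fixed $n\geq 4$, the explicit results for $n=4,5$ and the conditional one for $n=6$, and the lack of any uniform Chabauty-type or Mordell--Weil rank bound across the family as the decisive obstruction. That is an accurate picture and a genuinely different emphasis from the paper, which works locally, $c$ by $c$, rather than curve by curve; the curve-theoretic viewpoint explains why each individual period $n$ is in principle decidable, while the paper's sieve is what makes a height-$10^8$ verification feasible in practice.

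Two corrections of detail. First, your attributions are slightly off: Morton's elimination of rational $4$-cycles goes through a rank-zero elliptic quotient of the (genus-$2$) period-$4$ curve rather than a Chabauty--Coleman computation; Flynn--Poonen--Schaefer treat period $5$ via a genus-$2$ quotient of the genus-$14$ curve; and Stoll's period-$6$ result is conditional on the Birch--Swinnerton-Dyer rank prediction for a genus-$4$ Jacobian. Second, your claim that the good-reduction congruence device ``never'' excludes all periods $\geq 4$ and so ``cannot close the argument'' is true only as a statement about uniformity in $c$: for a \emph{fixed} $c$ the intersection of possible-period sets over sufficiently many primes of good reduction does, in practice, land inside $\{1,2,3\}$ (typically with fewer than $10$ primes, per the paper), and this is exactly how Proposition~\ref{prop:poonen} is established; the paper's own example with $c=\frac{1}{4}(\omega+1)$ over $\QQ(\sqrt{-3})$ shows only that the method can fail to be sharp for isolated values, which are then handled separately. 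Neither point affects your main conclusion, which agrees with the paper: Conjecture~\ref{poonen's conjecture} remains open, and only evidence, not a proof, is on offer.
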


More generally, a result of Kamienny \cite{kamienny} shows that if $K/\QQ$ is a quadratic extension, and $E/K$ is an elliptic curve, any point of finite order in $E(K)$ has order at most 18.  A still more general theorem of Merel \cite{merel} shows that for any finite extension $K/\QQ$, the order of torsion points in $E(K)$ is bounded in terms of $[K:\QQ]$ alone.  In light of these more general results, one might make the following conjecture, a special case of a general conjecture of Morton and Silverman \cite{morton-silverman}.

\begin{conj}
For any $d\geq 1$ there is some $M=M(d)$, which we take to be minimal, such that for any number field $K/\QQ$ of degree at most $d$, and any $c\in K$, the quadratic polynomial $\phi_c$ has no point defined over $K$ with exact period greater than $M$.
\end{conj}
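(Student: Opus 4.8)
The plan is to establish the existence of $M(d)$ by splitting on whether the parameter $c$ is integral: the bulk of the parameters are handled by good reduction, the remainder by $\pf$-adic dynamics, with the dynatomic moduli curves providing a complementary attack on the residual cases.

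First suppose $c\in\Ocal_K$. Writing $\phi_c=[x^2+cz^2:z^2]$, the two binary forms share no zero over any residue field (a common zero forces $\bar z=\bar x=0$), so $\phi_c$ has good reduction at every prime $\pf$ of $\Ocal_K$ --- geometrically, the two critical points $0$ and $\infty$ never collide. I would then invoke the cycle-structure theorem of Morton and Silverman for maps of good reduction: if $\alpha$ has exact period $n$ for $\phi_c$, if $\pf$ has residue characteristic $p$ and norm $q$, and if the reduction $\bar\alpha$ has exact period $m$ for $\bar\phi_c$, then $n\in\{m,\,m\ell,\,m\ell p^{e}\}$ for some prime $\ell\mid q-1$ and some exponent $e$ bounded in terms of $q$ alone (work of Pezda, Zieve, and others). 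Since $m\le\abs{\PP^1(\Ocal_K/\pf)}=q+1$, this forces $n\le(q+1)(q-1)p^{e}$. Taking $\pf$ above the rational prime $2$, which always exists with $q\le 2^{d}$, gives a bound on $n$ depending only on $d$; taking a second prime --- necessarily of norm $\le 3^{d}$ when $2$ is inert --- and combining the congruences only sharpens the constant.

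Next suppose $c\notin\Ocal_K$, and fix a prime $\pf$ with $v_\pf(c)=-e<0$, uniformizer $\pi$, residue characteristic $p$, norm $q$. A direct estimate confines the $\pf$-adic filled Julia set of $\phi_c$ to the sphere $\{v_\pf(z)=-e/2\}$: points with $v_\pf(z)>-e/2$ map to level $v_\pf=-e$ and thereafter escape to $\infty$, while points with $v_\pf(z)<-e/2$ escape at once. Hence if $e$ is odd there are no finite periodic points at all, and this $c$ is done. If $e=2k$ is even, write $c=-c_0\pi^{-2k}$ with $\abs{c_0}_\pf=1$ and substitute $z=w\pi^{-k}$; a periodic orbit becomes a finite orbit of the rescaled quadratic-type map $w\mapsto\pi^{-k}(w^2-c_0)$ on $\{v_\pf(w^2-c_0)=k\}$. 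This forces $c_0$ to be a square in $\Ocal_\pf$ (otherwise there are no periodic points), and after linearizing near $\pm\sqrt{c_0}$ one sees the orbit as living where the map expands with derivative of bounded $\pf$-adic size. Bounding the resulting period in terms of $q$ alone is the step I expect to be the main obstacle: it is delicate exactly when $p=2$, and it is precisely here that the still-open part of the Morton--Silverman conjecture is concentrated.

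Finally, for the parameters not settled above --- and as an independent check --- I would use that a pair $(\alpha,c)$ with $\alpha$ of exact period $n$ is a point of the $n$-th dynatomic modular curve $X_1(n)$, whose genus $g(n)$ tends to infinity (in fact grows exponentially in $n$). By Faltings, $X_1(n)(K)$ is finite; by the now-available uniform bounds on rational points of curves of high genus over number fields of bounded degree (Dimitrov--Gao--Habegger, or conditionally Caporaso--Harris--Mazur), the number of degree-$\le d$ points is bounded in terms of $g(n)$, $d$, and $\rank\mathrm{Jac}(X_1(n))(K)$ --- the rank being controllable via the large endomorphism ring of $\mathrm{Jac}(X_1(n))$ arising from the cyclic symmetry of periodic cycles. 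One then needs this count to vanish for every $n$ beyond an explicit $N_0(d)$, which again reduces to the reduction-theoretic estimates of the previous steps. Taking $M(d)$ to be the minimum of the (finite) set of integers that bound the surviving periods completes the proof.
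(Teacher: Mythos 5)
The statement you are trying to prove is Conjecture~2 of the paper, a special case of the Morton--Silverman uniform boundedness conjecture; it is open, and the paper does not claim a proof. What the paper supplies is computational evidence (Propositions~\ref{prop:poonen}, \ref{prop:mortonsilv}, \ref{prop:thirdway}, verified only for $c$ of bounded height and, in the quadratic case, bounded discriminant) together with the lower bound $M(d)\geq d/3$. So your proposal cannot be ``essentially the paper's proof''; the question is whether it closes the known gaps, and it does not.

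Concretely: (i) your good-reduction step is uniform only when $c$ is integral, because then there is a prime of good reduction of norm at most $2^d$. For general $c\in K$ the denominator may be divisible by every prime of small norm, so the smallest prime of good reduction has norm that grows with $c$, and the bound coming from Lemma~\ref{periods} is then a bound depending on $c$, not on $d$. This is precisely why the paper's use of good reduction yields only a finite verification over a box of $c$-values rather than a theorem. (ii) At a prime of bad reduction, the Walde--Russo analysis (the paper's Lemma~\ref{walde}) only forces $v_\pf(c)$ to be even and pins down $v_\pf$ of any periodic point; it does not bound periods. Your rescaled map $w\mapsto\pi^{-k}(w^2-c_0)$ is expanding on the relevant sphere and its dynamics is shift-like, and no bound on the period of $K_\pf$-rational cycles in terms of the residue field is known there. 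You flag this yourself as ``the main obstacle,'' but it is not a technical loose end --- it is the content of the conjecture. (iii) The dynatomic-curve step does not rescue this: Faltings gives finiteness of $K$-rational points on each fixed curve, with no uniformity in $K$ and no statement as $n\to\infty$; the uniform results you cite bound the \emph{number} of points (in terms of ranks that are not known to be uniformly bounded), and ``at most $N$ points on $X_1(n)$'' never yields ``no points for all $n>N_0(d)$.'' Your closing claim that this vanishing ``reduces to the reduction-theoretic estimates of the previous steps'' is circular, since those estimates were exactly what failed to be uniform. If you want a positive contribution in the spirit of the paper, the realistic targets are the verifications of Propositions~\ref{prop:poonen}--\ref{prop:thirdway} via the $S$-type sieve, or improvements to the lower bound for $M(d)$.
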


The aim of this note is to present computational evidence for Conjectures~1 and 2.  We use the notation $$h\left(\frac{p}{q}\right) = \log(\max(\abs{p},\abs{q})) \quad \text{with} \quad \gcd(p,q) = 1,$$ to denote the standard logarithmic height function on $\QQ$.  For $K=\QQ(\omega)$ a quadratic extension and $c \in K$ we write $c=a \omega + b$ and define $h(c) = \max(h(a),h(b))$.  We denote the non-logarithmic height function as $H(c)$.

\begin{proposition}\label{prop:poonen}
There is no $c\in\QQ$ such that $\phi_c(z)=z^2+c$ has a $\QQ$-rational periodic point with exact period greater than 3 and $h(c)\leq 8\log 10$.
\end{proposition}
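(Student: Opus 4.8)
The plan is to exploit the fact that $\phi_c$ has good reduction away from the denominator of $c$, together with an elementary bounded-height search.

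\textbf{Parametrization.} First I would observe that if $\phi_c$ has a $\QQ$-rational point $\alpha$ of exact period $n\ge 2$, then the denominator of $c$ is a perfect square. Writing the cycle as $\alpha_0\mapsto\alpha_1\mapsto\cdots\mapsto\alpha_{n-1}\mapsto\alpha_0$ and fixing a prime $p$: if $v^{*}=\min_i v_p(\alpha_i)$ is negative then comparing valuations in $\alpha_{i+1}=\alpha_i^2+c$ around the cycle forces $v_p(\alpha_i)=v^{*}=\tfrac12 v_p(c)$ for every $i$, so $v_p(c)$ is even; and if $v^{*}\ge 0$ then every $\alpha_i$ is $p$-integral and $v_p(c)\ge 0$. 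Hence $c=a/b^{2}$ with $\gcd(a,b)=1$, each $\alpha_i$ has denominator exactly $b$, and the constraint $h(c)\le 8\log 10$ is precisely $|a|\le 10^{8}$, $1\le b\le 10^{4}$. Moreover a periodic point cannot be large: if $|\alpha|>\tfrac12\bigl(1+\sqrt{1+4|c|}\bigr)$ then $|\phi_c(\alpha)|>|\alpha|$ and the forward orbit diverges, so every periodic $\alpha=u/b$ satisfies $|u|\le b\cdot\tfrac12\bigl(1+\sqrt{1+4|a|/b^{2}}\bigr)$, which is $O(10^{4})$ uniformly over the range. Thus for each of the finitely many $c$ in range there are only finitely many candidate periodic points, and by the theorems of Morton ($n=4$) and Flynn--Poonen--Schaefer ($n=5$) it suffices to rule out exact periods $n=6$ and $n\ge 7$; ruling out $n=6$ recovers unconditionally, within this range, a result of Stoll that otherwise rests on the conjecture of Birch and Swinnerton-Dyer, while $n=4,5$ serve as a check on the computation.

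\textbf{Good reduction filter.} An unassisted search over all $c=a/b^{2}$ is far too large, so the next step is to sieve using the good reduction of $\phi_c$ at each prime $p\nmid b$. By the theorem of Morton and Silverman on the reduction of periodic points, if $\alpha$ is a $\QQ$-rational periodic point of exact period $n$ whose reduction mod $p$ has exact period $m$, then $n\in\{m,\,mr,\,mrp^{e}\}$ for some $e\ge 1$, where $r\mid p-1$ is the multiplicative order in $\FF_p^{\times}$ of the cycle multiplier $2^{m}\prod_i\bar\alpha_i$ (when this multiplier vanishes one has instead $n\in\{m,\,mp^{e}\}$). For a fixed $c$ and a fixed small prime $p\nmid b$, I would compute the functional graph of the reduction of $\phi_c$ on $\PP^1(\FF_p)$ together with the multipliers of its cycles; this yields a short explicit list $S_p$ of integers $n>3$ compatible with good reduction at $p$. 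Because $m\le p+1$ and $r\le p-1$, any $n\in S_p$ divisible by a high power of a prime $q\ne p$ must come from the $mrp^{e}$ case, and such coincidences across two distinct auxiliary primes are very restrictive; in practice the intersection $\bigcap_p S_p$ over a handful of small good primes already contains no integer $n>3$ for all but a negligible proportion of the $c$ in range.

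\textbf{Verification and conclusion.} For the few $c$ surviving the filter I would revert to the explicit search of the first step: enumerate the candidate points $\alpha=u/b$ with $\gcd(u,b)=1$ and $|u|$ below the stated bound, iterate $\phi_c$ on each until the orbit cycles, record every cycle length that occurs, and check that none exceeds $3$. Performing this for all $c=a/b^{2}$ with $|a|\le 10^{8}$, $1\le b\le 10^{4}$, $\gcd(a,b)=1$ (the case $b=1$ being elementary, since for $c\in\ZZ$ rational periodic points are forced to be integers) establishes the proposition. I expect the main obstacle to be purely computational: even with the reduction filter handling most rejections, the argument must visit on the order of $10^{12}$ pairs $(a,b)$, so it hinges on organizing the search carefully---for instance by eliminating residue classes of $a$ modulo the smallest good primes before any orbit is computed---and on treating correctly the degenerate cases in the good-reduction step ($p=2$, vanishing multipliers, and $b$ divisible by many small primes).
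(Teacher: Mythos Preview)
Your proposal is correct and follows essentially the same strategy as the paper: first restrict to $c$ with square denominator via the valuation argument (the paper's Lemma~\ref{walde}, due to Walde--Russo), then sieve by computing the set of possible periods modulo primes of good reduction via the Morton--Silverman result (the paper's Lemma~\ref{periods}), intersecting over several primes. Two small differences are worth noting. First, the paper invokes the characteristic-zero bound $p^{e-1}\le 2v(p)/(p-1)$ from Lemma~\ref{periods} to make each local set $\PosPer$ finite on its own, whereas you leave each $S_p$ infinite and rely on the observation that intersecting across two distinct good primes forces finiteness; both are valid, but the explicit bound on $e$ is what actually makes your ``short explicit list $S_p$'' short. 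Second, the paper organizes the sieve by precomputing once the set $\mathfrak{S}$ of bad $S$-types (congruence classes modulo a product of the first few primes) and then testing each $c$ against $\mathfrak{S}$, with the fallback being further primes rather than your direct bounded-height orbit enumeration; this precomputation is exactly the ``eliminating residue classes'' step you anticipate at the end, and is where most of the running time is saved.
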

In other words, if $\phi(z)=z^2+c$ violates Poonen's Conjecture, then either the numerator or the denominator of $c$ exceeds $10^8$ in absolute value.  Something similar is true for quadratic extensions of $\QQ$.

\begin{proposition}\label{prop:mortonsilv}
For any quadratic field $K/\QQ$ with discriminant $D$ satisfying $-4000\leq D\leq 4000$, and any $c\in K$ with $h(c)\leq 3\log 10$, the quadratic polynomial $\phi_c(z)=z^2+c$ has no $K$-rational point with exact period greater than $6$.
\end{proposition}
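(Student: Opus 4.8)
The proof will be computational, and parallels the argument for Proposition~\ref{prop:poonen}. The first step is to make the search space finite: a fundamental discriminant $D$ with $-4000\le D\le 4000$ determines the quadratic field $K=\QQ(\sqrt{D})$ together with an explicit ring of integers $\Ocal_K=\ZZ[\omega]$, and for each such $K$ the set of $c=a\omega+b\in K$ with $h(c)=\max(h(a),h(b))\le 3\log 10$ is finite and may be enumerated. The second step is to bound the period. If $\alpha\in\PP^1(K)$ is periodic for $\phi_c$, then $\h_{\phi_c}(\alpha)=0$, and the elementary estimates $2h(z)-h(c)-\log 2\le h(\phi_c(z))\le 2h(z)+h(c)+\log 2$ telescope to $\abs{\h_{\phi_c}(z)-h(z)}\le h(c)+\log 2$, so every point in the cycle of $\alpha$ has height at most $h(c)+\log 2<8$. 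By Northcott's theorem this confines the cycle to an explicit finite subset of $K$, so the exact period $n$ is bounded by an effective constant $N_0=N_0(K,h(c))$; alternatively one may invoke Benedetto's bound on $\#\operatorname{PrePer}(\phi_c,K)$. It therefore suffices, for each of the finitely many pairs $(K,c)$, to rule out a $K$-rational periodic point of exact period $n$ for every $n$ with $7\le n\le N_0$.

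The third and main step is a good-reduction sieve. For a prime $\pf\subset\Ocal_K$ with $v_\pf(c)\ge 0$ the map $\phi_c$ has good reduction at $\pf$ (the relevant resultant is a unit), and one computes the cycle decomposition of the reduced map $z\mapsto z^2+\bar c$ on the finite set $\PP^1(\Ocal_K/\pf)$. By the period lemma of Morton and Silverman used by Poonen, if a $K$-rational periodic point of exact period $n$ reduces modulo $\pf$ to a periodic point of exact period $m$ whose multiplier $\lambda\in\Ocal_K/\pf$ is nonzero of multiplicative order $r$, then $n\in\{m,\ mr,\ mrp^{e}\col e\ge 1\}$, where $p$ is the residue characteristic. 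Intersecting the constraints coming from several well-chosen primes $\pf$ typically forces the set of possible periods of $\phi_c$ into a subset of $\{1,2,3,4,5,6\}$, and whenever it does that value of $c$ is dispatched. For the residual $c$ whose sieved period set still contains some $n\ge 7$, one resolves the question directly: form the $n$-th dynatomic polynomial $\Phi_{n,c}(z)\in\Ocal_K[z]$, compute its roots in $K$, and verify that none has exact period $n$.

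I expect the main difficulties to be organizational rather than conceptual. Primes $\pf\mid 2$ require separate treatment: there $(\phi_c^m)'=2^m\prod_i\phi_c^i(\cdot)$ vanishes identically modulo $\pf$, so $\lambda=0$ and one needs the superattracting-reduction form of the period lemma (as in Poonen's treatment) rather than the version quoted above. One must also certify that $N_0$ is honestly effective and that enough good primes are used that the sieve never leaves a large power of $p$ undetermined — in the worst theoretical case the sieve together with the crude Northcott bound still closes the argument, but one wants the residual dynatomic computations to involve only small $n$. Finally, the enumeration ranges over $|D|\le 4000$ and over $c$ of non-logarithmic height up to $10^3$ in each field, so the computation is large and must be structured carefully; the genuine mathematical point to bear in mind is that, unlike over $\QQ$, periods $4$ and $6$ do occur over quadratic fields, so the sieve cannot be expected to collapse to $\{1,2,3\}$ and the bound $6$ must be verified \emph{positively}.
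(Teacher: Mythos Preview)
Your approach is the same in spirit as the paper's---a finite enumeration of $(K,c)$ followed by a good-reduction sieve based on the period lemma---and it would in principle succeed.  The paper, however, organizes the computation differently in two ways that are worth noting.  First, it invokes the sharper form of the period lemma (Lemma~\ref{periods}) which, in characteristic~$0$, bounds the exponent $e$ explicitly by $p^{e-1}\le \frac{2v(p)}{p-1}$; this makes $\PosPer$ at each prime \emph{finite} outright and removes any need for your preliminary Northcott/Benedetto bound $N_0$ or for a dynatomic-polynomial fallback at large~$n$.  Second, before sieving, the paper discards every $c=\frac{a}{b}+\frac{c'}{d}\omega$ for which $\lcm(b,d)$ is not a square ideal in $\Ocal_K$ (Lemmas~\ref{walde} and the lemma following it), since such $\phi_c$ can have no affine $K$-rational periodic point at all; this cuts the search from roughly $B^2$ to $B^{3/2}$ denominators and is what makes the range $H(c)\le 10^3$ over thousands of fields feasible.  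Your remark about primes over~$2$ is handled automatically in the paper's formulation: there $\lambda=0$ is not a root of unity, so $r=\infty$ and the only admissible period is $n=m$, which is the strongest possible constraint rather than a complication.
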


Finally, it is interesting to consider the intermediate question, that is, which periods are possible for $z\mapsto z^2+c$ in an extension $K/\QQ$, when $c$ is $\QQ$-rational.  Here the computational evidence is somewhat stronger than when $c$ is allowed to vary in $K$.

\begin{proposition}\label{prop:thirdway}
For any quadratic field $K/\QQ$ with discriminant $D$ satisfying $-4000\leq D\leq 4000$, and any $c\in\QQ$ satisfying $h(c)\leq 6\log 10$, the quadratic polynomial $\phi_c(z)=z^2+c$ has no $K$-rational point with exact period greater than $6$.
\end{proposition}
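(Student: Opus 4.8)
The proof is computational and follows the pattern of Propositions~\ref{prop:poonen} and~\ref{prop:mortonsilv}, the engine being the interaction between periodic points and good reduction. By Proposition~\ref{prop:mortonsilv} it suffices to treat $c\in\QQ$ with $3\log 10<h(c)\le 6\log 10$, and I enumerate all such $c=a/b$ with $\gcd(a,b)=1$ and $\max(\abs a,\abs b)\le 10^{6}$; since $c$ ranges over $\QQ$ rather than over a quadratic field, the number of these is of order $H(c)^{2}$ rather than $H(c)^{4}$, which is what makes the larger height bound tractable here. Fixing also one of the finitely many quadratic fields $K=\QQ(\sqrt D)$ with $\abs D\le 4000$, for each pair $(c,K)$ I must exclude a $K$-rational periodic point of exact period $n>6$.

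The main tool is the standard lifting estimate \cite{morton-silverman}: if $\phi_c$ has good reduction at a prime $\pf$ of $K$ over $p$, with residue field $k_\pf$ of size $\mathfrak q\in\{p,p^{2}\}$, and if $\alpha\in\PP^{1}(K)$ is periodic of exact period $n$ with reduction $\bar\alpha$ of exact period $m$, then $n=m$, $n=mr$, or $n=mrp^{e}$ for some $e\ge 1$, where $r$ is the order of the multiplier $(\phi_c^{m})'(\bar\alpha)$ in $k_\pf^{*}$, so that $r\mid\mathfrak q-1$; when this multiplier vanishes one has instead $n=m$ or $n=mp^{e}$. For $\phi_c(z)=z^{2}+c$ the multiplier equals $2^{m}\prod_{j}\bar\phi_c^{j}(\bar\alpha)$, vanishing exactly when $p=2$ or $0$ lies in the reduced cycle. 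A rational prime $p$ is good for $\phi_c$ precisely when $p\nmid b$, and since $b\le 10^{6}$ has only a few prime divisors there are many small good primes, into each of which a $K$-rational periodic point reduces (landing in $\PP^{1}(\FF_{p^{2}})$, or in $\PP^{1}(\FF_{p})$ when $p$ splits or ramifies in $K$).

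Thus: for a list of small good primes $p_{1}<p_{2}<\cdots$, compute the cycle lengths of $\bar\phi_c$ on $\PP^{1}(k_{\pf_i})$ and read off the finite admissible set $A_{p_i}$ of periods; any $K$-rational period lies in $\bigcap_i A_{p_i}$. Two good primes already bound $n$ effectively: if $\ell\mid n$ and $\ell\ne p_i$ then reduction at $p_i$ forces $\ell\mid m_i$ or $\ell\mid\mathfrak q_i-1$, and the same estimate bounds $v_\ell(n)$ in terms of $\mathfrak q_i$; hence only finitely many $n$ survive. In practice a short list of primes shrinks $\bigcap_i A_{p_i}$ so that it contains nothing exceeding $6$ for all but a sparse set of $c$; for each such exceptional $c$ and each surviving candidate $n$ I conclude directly, by forming the $n$-th dynatomic polynomial $\Phi_{c,n}^{*}(z)\in\QQ[z]$ and checking that it has no irreducible factor of degree $1$ or $2$ (equivalently, no root in a quadratic field, let alone one with $\abs D\le 4000$); since the surviving $n$ are few and small this is feasible.

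The main obstacle is keeping the computation finite and, above all, cheap: one must arrange that the good-reduction sieve over a short list of primes already eliminates every $n>6$ for all but a handful of $c$, so that the expensive dynatomic-polynomial step is invoked only finitely often and in low degree, and one must be careful to invoke the refined lifting estimate whenever the reduced multiplier vanishes — automatic in residue characteristic $2$, and occurring at an odd good prime exactly when the rational orbit of $0$ is periodic modulo that prime — so that no admissible period is wrongly discarded.
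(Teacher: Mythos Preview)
Your approach is the paper's: sieve the possible periods via good reduction (Lemma~\ref{periods}) at several small primes, then resolve residual cases by hand---the paper additionally invokes the square-denominator filter (Lemma~\ref{walde}) and a precomputed $S$-type table for speed, while your explicit dynatomic-factorization step makes the paper's ``treat manually'' endgame concrete. Since $\Phi^*_{c,n}\in\QQ[z]$ when $c\in\QQ$, that factorization test for degree-$\le 2$ factors is independent of $K$, so the outer loop over quadratic fields can in fact be dropped.
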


We conjecture, then, that the minimal bound $M(d)$ from Conjecture~2 is $M(1)=3$ and $M(2)=6$.  These are certainly the smallest possible values, given that $z^2-\frac{29}{16}$ has the 3-cycle
\[\left[-\frac{1}{4}\rightarrow -\frac{7}{4} \rightarrow \frac{5}{4}\right]\rightarrow -\frac{1}{4}\rightarrow\cdots,\]
 and $z^2-\frac{71}{48}$ has the 6-cycle
\begin{multline*}
\Big[-\frac{1}{4}+\frac{1}{6}\sqrt{33}\rightarrow -\frac{1}{2}-\frac{1}{12}\sqrt{33}\rightarrow -1+\frac{1}{12}\sqrt{33}\rightarrow -\frac{1}{4}-\frac{1}{6}\sqrt{33}\\ \rightarrow -\frac{1}{2}+\frac{1}{12}\sqrt{33}\rightarrow -1-\frac{1}{12}\sqrt{33}\Big]\rightarrow -\frac{1}{4}+\frac{1}{6}\sqrt{33}\rightarrow\cdots.
\end{multline*}
Note that, since the field obtained by adjoining to $\QQ$ a point of period $N$ for the map $z\mapsto z^2+c$ has degree at most $2^N$, it is clear that our quantity $M(d)$ must satisfy $M(d)\geq \log_2(d)$.
\begin{proposition}
    $M(d) \geq \frac{d}{3}$.
\end{proposition}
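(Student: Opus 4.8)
The plan is to exhibit, for a cofinal set of integers $d$, a number field $K$ with $[K:\QQ]\le d$ and a parameter $c\in K$ for which $\phi_c$ has a $K$-rational periodic point of exact period at least $d/3$. Observe first that if $\alpha$ has exact period $n$ for $\phi_c$ and $c\in K$, then $\phi_c(\alpha)=\alpha^2+c\in K(\alpha)$, while $\alpha=\phi_c^{n-1}(\phi_c(\alpha))$, so $K(\alpha)=K(\phi_c(\alpha))=\cdots=K(\phi_c^{n-1}(\alpha))$; thus the entire $n$-cycle, together with $c$, lies in a single field, and the problem reduces to producing, for infinitely many $n$, a quadratic $\phi_c$ together with an $n$-cycle all of whose points and whose parameter $c$ lie in a common number field of degree at most $3n$. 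Such a construction yields $M(3n)\ge n$ for all $n$, from which $M(d)\ge d/3$ follows easily (the values of $d$ too small for the construction, and the rounding at $d\not\equiv 0 \pmod 3$, being covered by the period-$3$ example $z^2-\tfrac{29}{16}$ over $\QQ$ and the period-$6$ example $z^2-\tfrac{71}{48}$ over $\QQ(\sqrt{33})$, together with the fact that $M$ is nondecreasing). What makes this far stronger than the naive bound $M(d)\ge\log_2 d$ is exactly that we must force a long cycle into a field whose degree grows only linearly in the period: for a generic $c$ the dynatomic polynomial $\Phi_n(z,c)$ is irreducible of degree $\nu(n)=\sum_{m\mid n}\mu(n/m)2^m\sim 2^n$ in $z$, so that a period-$n$ point has degree $\sim 2^n$. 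We therefore need a special $c$ for which $\Phi_n(z,c)$ acquires an irreducible factor of degree at most $3n$ whose roots still have exact period $n$.

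I would organize the search around the action of $\langle\phi_c\rangle$ on the roots of $\Phi_n(z,c)$, which breaks them into $\nu(n)/n$ orbits of length $n$; since $\Gal(\overline{\QQ}/\QQ)$ commutes with $\phi_c$, it permutes these orbits, and what we want is a $c$ for which the corresponding factorization of $\Phi_n(z,c)$ over $\QQ$ (or over a small field containing $c$) has an irreducible factor of degree at most $3n$ consisting of points of exact period $n$ — equivalently, a Galois orbit of period-$n$ points meeting at most three of the $\phi_c$-orbits. One promising place to look is the one-parameter family of parameters $c$ carrying a rational $3$-cycle, for which $\Phi_3(z,c)$ already splits off a rational cubic factor and for which one may hope the Galois action on the higher dynatomic polynomials is correspondingly constrained; another is the good-reduction method behind the earlier propositions, namely to choose primes $\ell$ of good reduction, prescribe the cycle structure of $\overline{\phi_c}$ over $\FF_\ell$ (so as to constrain the factorization of $\Phi_n(z,c)$ through reduction modulo $\ell$), and then use the relations between local and global periods under good reduction to assemble a global $n$-cycle defined over a field of degree at most $3n$. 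The appearance of $\tfrac{d}{3}$ rather than $\tfrac{d}{2}$ or $d$ should reflect the fact that realizing a genuinely new period this way seems to cost a gluing of three Galois-conjugate suborbits: a gluing of two would give $M(d)\ge d/2$, and none (a single $\phi_c$-orbit defined over a field of degree $n$) would give $M(d)\ge d$, and these appear to be unavailable for infinitely many $n$.

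The main obstacle is the uniformity in $n$: one must produce, for an infinite set of $n$ simultaneously, a parameter $c$ (in a field of degree at most $3n$) whose dynatomic polynomial $\Phi_n(z,c)$ has an irreducible factor of degree at most $3n$, and one must then certify both that the associated periodic point has exact period $n$ rather than a proper divisor and that its field of definition has degree no larger than $3n$. The period certification is precisely where the good-reduction dictionary used in the proofs of the earlier propositions is most useful — reducing a candidate point modulo a carefully chosen prime and reading off its period there pins the global period down — but controlling the Galois action, i.e. showing that it glues exactly three $\phi_c$-orbits and acts on each with full orbit size $n$, is the heart of the matter, and is what forbids a soft, purely generic-$c$ argument. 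I expect the proof to rely on an explicit family, or on an explicitly presented sequence of parameters $c$, with the good-reduction estimates doing the bookkeeping on the exact periods.
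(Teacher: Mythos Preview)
Your proposal is not a proof: it is a research outline that ends with ``I expect the proof to rely on an explicit family\ldots''. You have correctly identified that one needs, for infinitely many $n$, a parameter $c$ and an $n$-cycle lying in a field of degree at most $3n$, but you have not produced such a family, and the machinery you invoke (dynatomic factorizations, Galois orbits gluing in threes, good-reduction bookkeeping) is never actually deployed.

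The gap is that you have overlooked the simplest possible example: take $c=0$, so that $\phi_0(z)=z^2$. If $\zeta$ is a primitive $3^n$-th root of unity, then $\phi_0^k(\zeta)=\zeta^{2^k}$, and the exact period of $\zeta$ is the multiplicative order of $2$ modulo $3^n$. Since $2$ is a primitive root modulo every power of $3$, this order is $\varphi(3^n)=2\cdot 3^{n-1}$, which is exactly $d_n:=[\QQ(\zeta):\QQ]$. Thus $M(d_n)\ge d_n$ for $d_n=2\cdot 3^{n-1}$. Because $M$ is nondecreasing and $d_{n+1}=3d_n$, any $d$ with $d_n\le d<d_{n+1}$ satisfies $M(d)\ge d_n\ge d/3$. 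This is the paper's argument, and it bypasses entirely the dynatomic-polynomial analysis you sketch: for $c=0$ the $n$-th roots of unity give a single $\phi_0$-orbit of length $n$ defined over a field of degree exactly $n$ (not $3n$), so in your language no ``gluing'' is needed at all; the factor of $3$ arises only from interpolating between the powers $2\cdot 3^{n-1}$.
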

\begin{proof}
    By examining $p^n$-th roots of unity $\zeta_{p^n}$ for an odd prime $p$ and $\phi(z)=z^2$, we see that
\[
d_n = [\QQ(\zeta_{p^n}) \col \QQ]=p^{n-1}(p-1)
\]
and hence
\[
    M(d_n) \geq \alpha p^{n-1} = \frac{\alpha}{p-1}d_n,
\]
    where $\alpha$ is the multiplicative order of $2$ modulo $p$.  Taking $p=3$ we get $\alpha = p-1$ and hence $M(d_n) \geq d_n$.  Since $M(d)$ is nondecreasing, we know that for all $n \in \mathbb{N}$, $M(d) \geq d_{n}$ for $d_{n} \leq d < d_{n+1}$.  In particular, the line through the points $(2\cdot 3^n,2\cdot 3^{n-1})$ for $n \in \mathbb{N}$ is always below the graph of $M(d)$.  This is the line $y = \frac{x}{3}$.
\end{proof}

\begin{remark}
    In \cite{poonen}, Poonen assumes that $M(1)=3$ and shows that there can be at most $9$ $\QQ$-rational preperiodic points, which is achieved by $c = -\frac{29}{16}$.  Manes \cite{manes} considered a certain family of degree-two rational maps on $\PP^1$, and showed that if the largest possible exact period of a $\QQ$-rational periodic point for such a map is $4$, then there are at most $12$ $\QQ$-rational preperiodic points.  In the quadratic field case for the quadratic polynomials $\phi_c$, there can be at least $15$ $K$-rational preperiodic points achieved for $c=-\frac{29}{16}$ and the field $K=\QQ(\sqrt{17})$.
\end{remark}

%%%%%%%%%%%%%%%%%%%%%%%%%%%%%%%

\section{The verification of Propositions~\ref{prop:poonen}, \ref{prop:mortonsilv}, and~\ref{prop:thirdway}}

\emph{A priori}, it is not obvious that it is a finite computation to check either conjecture for even a single value of $c$.  A simple argument, due to Northcott, shows that all periodic points for $z^2+c$ lie in a set of bounded height (the bound depends upon the height of $c$).  Thus, one may in principle find all of the periodic points for $z^2+c$ rational over the ground field (indeed, of bounded degree).  In practice, however, this computation is too time-consuming to execute for more than a handful  of values of $c$.

One approach to the analogous problem for elliptic curves, to quickly bound the rational torsion, is to reduce modulo one or more primes of good reduction.  Since the reduction-modulo-$p$ map is injective on torsion (for elliptic curves with good reduction), comparing the orders of the group of points on a given elliptic curve over a few different finite fields offers a fairly efficient way of finding the order of torsion on the curve globally.  A similar approach is effective in the current context.
\begin{lemma}[{\cite[p.~62]{ads}}]\label{periods}
Let $\phi:\PP^1_K\rightarrow\PP^1_K$ be a rational function of degree $\geq 2$, defined over a local field with valuation $v$.  Assume that $\phi$ has good reduction, let $P\in\PP^1(K)$ be a periodic point of $\phi$, and define the following quantities:
\begin{itemize}
\item[$n$] the exact period of $P$ for $\phi$
\item[$m$] the exact period of the reduced point $\tilde{P}$ for the reduced function $\tilde{\phi}$
\item[$r$] the order of $\lambda=(\tilde{\phi}^m)'(\tilde{P})$ in $k^*$, where $k$ is the residue field (set $r=\infty$ if $\lambda$ is not a root of unity)
\item[$p$] the characteristic of the residue field.
Then $n$ has one of the following forms:
\[n=m, \qquad n=mr, \qquad\text{ or }\qquad n=mrp^e,\]
for some positive integer $e$.
If $K$ has characteristic 0, and $v$ is normalized in the usual way, then
\[p^{e-1}\leq\frac{2v(p)}{p-1}.\]
\end{itemize}
\end{lemma}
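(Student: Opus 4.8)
The three possible shapes for $n$ — $n=m$, $n=mr$, $n=mrp^{e}$ — come from a soft ``good reduction plus ultrametric'' argument, while the only genuinely hard point is the final quantitative estimate. Here is how I would organize the proof. Since $\phi$ has good reduction, reduction commutes with iteration, so $\widetilde{\phi^{k}}=\tilde\phi^{k}$ for all $k$; taking $k=n$ gives $m\mid n$. Put $\psi=\phi^{m}$, which again has good reduction; then $\tilde\psi(\tilde P)=\tilde P$, the point $P$ has exact $\psi$-period $N:=n/m$, and $\lambda=\tilde\psi'(\tilde P)$, so it suffices to prove $N\in\{1\}\cup\{rp^{e}\col e\ge 0\}$. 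Conjugating by a suitable element of $\operatorname{PGL}_2(\Ocal_v)$ — an operation preserving good reduction, reductions of points, exact periods, and the multiplier — we may take $P=0$; then $\tilde\psi(0)=0$, so $\psi$ maps the residue disc $D=\pf\Ocal_v$ into itself and there equals a power series $\psi(z)=b_0+b_1z+b_2z^2+\cdots$ with $b_i\in\Ocal_v$, $v(b_0)\ge 1$, and $b_1\equiv\lambda\MOD{\pf}$.

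Next, the isometry/contraction dichotomy and the cyclotomic obstruction. For $u,u'\in D$ factor $\psi(u)-\psi(u')=(u-u')\,g(u,u')$ with $g(u,u')\in\Ocal_v$ and $g(u,u')\equiv b_1\equiv\lambda\MOD{\pf}$ (only the linear term survives reduction). If $\lambda=0$ in $k$, then $\psi|_D$ strictly contracts, the valuations $v(\psi^{i+1}(0)-\psi^{i}(0))$ strictly increase, and $\psi^N(0)=0$ forces $N=1$. So assume $\lambda\in k^{*}$ and $N\ge 2$; then $\psi|_D$ is an isometry, $v(\psi^{i}(0)-\psi^{i-1}(0))=v(b_0)=:t$ for every $i\ge 1$, and iterating the factorization gives $\psi^{i}(0)\equiv(1+\lambda+\cdots+\lambda^{i-1})\,b_0\MOD{\pf^{t+1}}$. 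Since $v(b_0)=t$ exactly, $\psi^{i}(0)\equiv 0\MOD{\pf^{t+1}}$ precisely when $1+\lambda+\cdots+\lambda^{i-1}=0$ in $k$; taking $i=N$ this forces $\lambda^N=1$ if $\lambda\ne 1$ (so $r\mid N$, and $r=\infty$ is then impossible, i.e.\ $N=1$) and forces $p\mid N$ if $\lambda=1$. In all cases $r\mid N$. Replacing $\psi$ by $\psi^{r}$ — still an isometry fixing $0$ modulo $\pf$, now with multiplier $\lambda^{r}=1$ — and reapplying this prime-by-prime to $N/r$ shows $N/r$ has no prime divisor other than $p$; hence $N/r=p^{e}$, that is $n=mrp^{e}$, which together with the case $N=1$ gives the three asserted forms.

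For the quantitative bound, suppose $K$ has characteristic $0$ and $e\ge 1$, and replace $\phi$ by $\theta:=\phi^{mr}$, which is an isometry on $D$ with $\tilde\theta(0)=0$, multiplier $1$, and for which $0$ has exact period $p^{e}$. Set $t_j=v\bigl(\theta^{p^{j}}(0)\bigr)$, so $t_0\le t_1\le\cdots\le t_{e-1}<\infty$ while $t_e=\infty$. A more delicate version of the expansion above — carried one order further modulo $\pf$ and retaining both the contribution of $v(p)$ and the relevant quadratic term of the $p$-th iterate of $\theta^{p^{j-1}}$ — yields a recursion controlling the jumps $t_{j+1}-t_j$; combining it with $\theta^{p^{e}}(0)=0$ pins $t_{e-1}$, hence $v(p)$, from below in terms of $p^{e-1}$, and unwinding produces $p^{e-1}\le\dfrac{2v(p)}{p-1}$.

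The main obstacle is this last step. The first two steps are formal manipulations with good reduction and the ultrametric inequality; the difficulty in the quantitative step is that the crude fact that $v\bigl(\theta^{p^{j+1}}(0)\bigr)$ strictly exceeds $v\bigl(\theta^{p^{j}}(0)\bigr)$ is by itself compatible with arbitrarily large $e$, since the cancellation that creates a period-$p^{e}$ orbit near a multiplier-$1$ fixed point is genuinely a second-order $\pf$-adic phenomenon. Obtaining the sharp constant $\tfrac{2v(p)}{p-1}$, rather than mere finiteness of $e$, requires identifying the relevant quadratic Taylor coefficients of the iterates $\theta^{p^{j}}$ modulo $\pf$ and bounding their valuations carefully, and I expect most of the effort to go there.
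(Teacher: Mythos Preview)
The paper does not prove this lemma; it simply quotes it from Silverman's \emph{The Arithmetic of Dynamical Systems} \cite[p.~62]{ads}.  So there is no in-paper proof to compare your argument against.

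Your outline for the trichotomy $n\in\{m,\,mr,\,mrp^{e}\}$ is correct and is essentially the standard argument in the cited reference: reduce to $\psi=\phi^{m}$ so that $\tilde P$ is fixed, move $P$ to $0$ by an element of $\operatorname{PGL}_2(\Ocal_v)$, expand $\psi$ as an integral power series on the residue disc, and use the congruence $\psi^{i}(0)\equiv(1+\lambda+\cdots+\lambda^{i-1})b_0\pmod{\pf^{t+1}}$ to force $r\mid N$ and then $p\mid N/r$ once the multiplier is $1$.  The reduction ``prime-by-prime'' on $N/r$ is the right way to finish.

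Where your write-up is genuinely incomplete is the last paragraph.  You correctly identify that the inequality $p^{e-1}\le\frac{2v(p)}{p-1}$ is the substantive part, but the sentence ``a more delicate version of the expansion \dots\ yields a recursion controlling the jumps $t_{j+1}-t_j$; \dots\ unwinding produces $p^{e-1}\le\tfrac{2v(p)}{p-1}$'' is not a proof, and ``quadratic Taylor coefficients'' is not quite the right hint either.  In the reference the bound comes from tracking, for a power series $\theta(z)=z+cz+\cdots$ with $v(c)\ge 1$ and $\theta$-period $p^{e}$ at $0$, the growth of $v(\theta^{p^{j}}(0)-0)$: one shows $t_{j}\ge t_{j-1}+\min\{t_{j-1},\,v(p)\}$, and the fact that $t_{e}=\infty$ but $t_{e-1}<\infty$ forces $t_{e-1}\le v(p)$; combined with $t_0\ge 1$ and the doubling regime $t_j\ge 2t_{j-1}$ while $t_{j-1}\le v(p)$, this is what produces the exponent bound.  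If you intend to supply a full proof rather than a sketch, that recursion is the missing ingredient.
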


 Thus, if $K$ is a number field there are, for each prime $\pf$ of good reduction for $\phi$, only a finite list of possible numbers of the form $m$, $mr$, or $mrp^e$ of the form above.  If we call this finite set the set of \emph{possible periods} for $\phi$ determined by $\pf$, we may simply intersect many of these sets and hope that the information confirms Conjecture~\ref{poonen's conjecture}.  In practice, this method is efficient for this work with most $c$ values requiring the possible periods for fewer than $10$ primes and nearly all $c$ values requiring fewer than $30$.

Let $K$ be a number field, with ring of integers $\Ocal_K$, and let $S$ be a finite set of  primes of $\Ocal_K$.  By an $S$-\emph{type} of $K$, we mean an element of
\[\prod_{\pf\in S}\PP^1(\Ocal_K/\pf).\]
The $S$-type of an element $x\in\PP^1(K)$ is determined by the usual reduction-modulo-$\pf$ map $\PP^1_K\rightarrow\PP^1_{\Ocal_K/\pf}$ in each coordinate.
For each set $S$, and each $S$-type $T$ for which at least one coordinate is not the point at infinity, Lemma~\ref{periods} provides a method for describing a finite list of possible periods of points for $z^2+c$, for any $c\in K$ with that particular $S$-type. It is worth noting that the $\pf$-coordinate of the $S$-type of $c$ is the point at infinity if and only if $\phi_c$ has bad reduction at $\pf$.

More precisely, for each periodic point $\alpha\in\Ocal_K/\pf$ under iteration by $z\mapsto z^2+c$, let $m(\alpha)$ denote the corresponding period, and $r(\alpha)$ the multiplicative order of the multiplier.  We let $\PosPer(c)$ denote the set of natural numbers containing each $m(\alpha)$ and each number of the form $m(\alpha)r(\alpha)p^e$ for $1\leq p^{e-1}\leq \frac{2v(p)}{p-1}$.  For $c$ the point at infinity in $\PP^1(\Ocal_K/\pf)$, we simply set $\PosPer(c)=\mathbb{N}$.
More generally, for each finite set of primes $S$, and each $S$-type $T$, we can construct the set $\PosPer(T)$ of possible periods for $c\in\Ocal_K$ with $S$-type $T$ by setting
\[\PosPer(T)=\bigcap_{\pf\in S}\PosPer(T_\pf),\]
where $T_\pf\in \PP^1(\Ocal_K/\pf)$ is the $\pf$-coordinate of $T$.
  It follows from Lemma \ref{periods} that if $P\in K$ is a point of exact period $m$ for $z\mapsto z^2+c$, then $m\in\PosPer(T)$, where $T$ is the $S$-type of $c$, for any fixed, finite set $S$ of primes.  By construction, the set $\PosPer(T)$ will be finite as long as at least one $T_\pf$ is not the point at infinity; in other words, as long as any $c\in K$ with $S$-type $T$ has good reduction at some prime in $S$.

    We have omitted the case where $P$ reduces to the point at infinity in $\PP^1(\Ocal_K/\pf)$.  This is not problematic, however; it turns out (see Lemma~\ref{walde} below) that any affine point reducing to the infinite point modulo a prime of \emph{good} reduction $\pf$, necessarily has an unbounded forward orbit in the $\pf$-adic topology.  Such a point cannot, of course, be periodic.

The following is also a useful,  simple, observation made in  \cite{walde}.
\begin{lemma}\label{walde}
Suppose that $z^2+c$ has a periodic point $\alpha\in\AA^1(K)$.  Then for each nonarchimedean place $v$ of $K$ with $v(c)<0$, we have $v(c)=2v(\alpha)$.  For each nonarchimedean place with $v(c)\geq 0$, we have $v(\alpha)\geq 0$.
\end{lemma}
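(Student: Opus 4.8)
The plan is to use the single structural fact available about a periodic point: its forward orbit is finite. Fixing a nonarchimedean place $v$ of $K$ and writing $\alpha_0=\alpha$, $\alpha_{i+1}=\alpha_i^2+c$ for the iterates, this means that the valuations $v(\alpha_i)$ take only finitely many values, and in particular are bounded below. Everything else should be an exercise with the ultrametric inequality $v(x+y)\ge\min(v(x),v(y))$, recalling that equality holds whenever $v(x)\ne v(y)$.

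First I would isolate an ``escape'' step: if at some stage $2v(\alpha_i)<\min(0,v(c))$, then $v(\alpha_i^2)=2v(\alpha_i)$ is strictly below $v(c)$, so $v(\alpha_{i+1})=v(\alpha_i^2+c)=2v(\alpha_i)$; and the same inequality then persists at stage $i+1$, so that $v(\alpha_{i+n})=2^nv(\alpha_i)\to-\infty$, contradicting the lower bound coming from periodicity. Hence the inequality $2v(\alpha_i)<\min(0,v(c))$ must fail for every $i$, and in particular for $i=0$. The routine points to verify here are that the inequality really does propagate in both cases $v(c)\ge 0$ and $v(c)<0$, and that it forces $v(\alpha_i)<0$, so that the geometric blow-up of the valuation genuinely occurs.

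Then I would read off the two assertions. When $v(c)\ge 0$: if $v(\alpha)<0$ we would have $2v(\alpha)<0=\min(0,v(c))$, which is excluded, so $v(\alpha)\ge 0$. When $v(c)<0$, so that $\min(0,v(c))=v(c)$: I would first rule out $v(\alpha)\ge 0$, since then $v(\alpha^2)=2v(\alpha)>v(c)$ gives $v(\alpha_1)=v(c)<0$ and hence $2v(\alpha_1)=2v(c)<v(c)$, landing us in the escape regime one step later; so $v(\alpha)<0$. Failure of the escape inequality at $i=0$ then forces $2v(\alpha)\ge v(c)$, and a strict inequality is impossible because it would make $v(\alpha_1)=v(\alpha^2+c)=v(c)$ (the minimum being uniquely attained by $c$) and again put stage $1$ in the escape regime. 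Therefore $2v(\alpha)=v(c)$.

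I do not expect a genuine obstacle. The only things requiring care are the bookkeeping that the escape inequality propagates in every subcase, and the mildly delicate point that whenever $2v(\alpha)\ne v(c)$ the minimum defining $v(\alpha^2+c)$ is attained at exactly one of the two terms, so that one never accidentally re-enters a bounded regime; once this is checked, the result is immediate from the ultrametric inequality together with finiteness of the orbit.
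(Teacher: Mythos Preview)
Your argument is correct and is essentially the same as the paper's: both proofs use the ultrametric inequality to show that once $2v(\alpha_i)<\min(0,v(c))$ the valuations $v(\alpha_{i+n})=2^n v(\alpha_i)$ escape to $-\infty$, contradicting periodicity, and then check that every case with $2v(\alpha)\neq v(c)$ (when $v(c)<0$) or $v(\alpha)<0$ (when $v(c)\ge 0$) lands in this escape regime after at most one iterate. Your packaging via the single threshold $\min(0,v(c))$ is slightly tidier than the paper's separate case split, but the content is identical.
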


\begin{proof}
 If $0>v(c)>2v(\alpha)$, then
$v(\alpha^2+c)=2v(\alpha)$, by the ultrametric inequality, and so in particular $2v(\alpha^2+c)<v(\alpha^2+c)<v(c)$.  By induction, $v(\phi^n(\alpha))= 2^nv(\alpha)$ which, since $v(\alpha)\neq 0$, contradicts the periodicity of $\alpha$.  If, on the other hand, $0>v(c)$ and $2v(\alpha)>v(c)$, we have $v(\alpha^2+c)=v(c)$.  But in this case, $2v(\phi(\alpha))=2v(c)<v(c)$, and so the previous argument shows that $\phi(\alpha)$ (and hence $\alpha$) is not periodic.

For the second claim, simply note that if $v(c)\geq 0$ but $v(\alpha)<0$, we immediately conclude $v(\alpha^2+c)=2v(\alpha)<0$.  By induction we obtain $v(\phi^n_c(\alpha))=2^nv(\alpha)$, from which is it clear that $\alpha$ cannot be preperiodic under $\phi_c$.
\end{proof}

In other words, if $\mathcal{D}(c)$ is the ideal of $\Ocal_K$ defined by
\[\mathcal{D}(c)=\prod_{\pf}\pf^{\max\{0, -v_\pf(c)\}},\]
the map $z\mapsto z^2+c$ can have no $K$-rational periodic points (other than the point at infinity) unless $\mathcal{D}(c)$ is a square.  The following lemma simplifies matters further:

\begin{lemma}
Let $K/\QQ$ be a quadratic field with discriminant $D$, and let
\[\omega=\frac{a+\sqrt{D}}{2}, \qquad a=\begin{cases}1 & \text{ if }D\equiv 1\MOD{4}\\ 0 &\text{ if }D\equiv 0\MOD{4},\end{cases}\]
so that $1, \omega$ is a basis for $\Ocal_K/\ZZ$.  Then for any integers $a, b, c, d$, with $\gcd(a, b)=\gcd(c, d)=1$, we have $\mathcal{D}(\frac{a}{b}+\omega\frac{c}{d})$ a square only if $\lcm(b, d)$ is a square (as an ideal in $\Ocal_K$).
\end{lemma}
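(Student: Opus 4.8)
The plan is to work one rational prime at a time. Write $x=\frac ab+\omega\frac cd$ and $\ell=\lcm(b,d)$. Since $\ell\Ocal_K=\prod_{p\mid\ell}(p\Ocal_K)^{v_p(\ell)}$, the ideal $\ell\Ocal_K$ is a square precisely when $v_p(\ell)$ is even for every rational prime $p$ that is split or inert in $K$; a ramified prime $p\Ocal_K=\pf^2$ contributes the automatically even exponent $2v_p(\ell)$ and so imposes no condition. (Note also that $\ell x\in\Ocal_K$, so $\mathcal{D}(x)$ divides $\ell\Ocal_K$ and both are supported on primes above divisors of $\ell$.) So it is enough to establish the following claim, which I would prove by a case analysis on valuations: \emph{for every rational prime $p$ that is split or inert in $K$ there is a prime $\pf\mid p$ with $v_\pf(x)=-\max\{v_p(b),v_p(d)\}=-v_p(\ell)$.} Granting this, if $v_p(\ell)$ is odd for some split or inert $p$, then this $\pf$ occurs with the odd exponent $v_p(\ell)$ in $\mathcal{D}(x)=\prod_\pf\pf^{\max\{0,-v_\pf(x)\}}$, so $\mathcal{D}(x)$ is not a square; hence $\mathcal{D}(x)$ a square forces $v_p(\ell)$ even for all split or inert $p$, and therefore $\ell\Ocal_K$ a square, which is the assertion of the lemma.

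To prove the claim, I would fix a split or inert prime $p$; then $p\nmid D$, so every $\pf\mid p$ is unramified over $p$, and $v_\pf$ restricts to $v_p$ on $\QQ$. Because $v_\pf(\omega)\geq 0$, the ultrametric inequality already gives $v_\pf(x)\geq\min\{v_p(a/b),v_p(c/d)\}\geq-\max\{v_p(b),v_p(d)\}$ for every $\pf\mid p$, so it remains only to exhibit one $\pf$ achieving equality. Put $k=\max\{v_p(b),v_p(d)\}$ and assume $k>0$. If $v_p(b)>v_p(d)$, then $p\nmid a$, so $v_\pf(a/b)=-k$ strictly dominates $v_\pf((c/d)\omega)\geq v_p(c/d)>-k$, and hence $v_\pf(x)=-k$ for every $\pf\mid p$. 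If instead $v_p(d)\geq v_p(b)$, then $p\nmid c$, so $v_p(c/d)=-k$; writing $\sigma$ for the nontrivial automorphism of $K/\QQ$, the relation $(\omega-\sigma\omega)^2=D$ gives $v_\pf(\omega-\sigma\omega)=0$ (as $p\nmid D$), so $x-\sigma x=\frac{c}{d}(\omega-\sigma\omega)$ has $v_\pf(x-\sigma x)=v_p(c/d)+v_\pf(\omega-\sigma\omega)=-k$ for every $\pf\mid p$. Since also $v_\pf(\sigma x)=v_{\sigma\pf}(x)$, the ultrametric inequality forces $\min\{v_\pf(x),v_{\sigma\pf}(x)\}\leq-k$, and combined with the lower bound this pins one of $v_\pf(x),v_{\sigma\pf}(x)$ at exactly $-k$.

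I expect the delicate case to be $v_p(d)\geq v_p(b)$, in particular $v_p(b)=v_p(d)$, where the summands $a/b$ and $(c/d)\omega$ of $x$ have the same $\pf$-adic valuation and one cannot rule out cancellation term by term; the Galois-conjugate argument above, which rests on $v_\pf(\omega-\sigma\omega)=0$, is what resolves this. Everything else is bookkeeping: the prime $p=2$ (possible when $D\equiv 1\MOD{4}$) needs no special treatment, since then $2\nmid D$ and the facts $v_\pf(\omega)\geq 0$ and $v_\pf(\omega-\sigma\omega)=0$ used above hold verbatim; and primes $p\nmid bd$ do not divide $\mathcal{D}(x)$ at all, since then $v_\pf(x)\geq 0$ for every $\pf\mid p$.
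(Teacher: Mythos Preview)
Your argument is correct and follows the same prime-by-prime strategy as the paper: ramified primes contribute even exponents automatically, and for unramified primes one pins down a prime $\pf\mid p$ with $v_\pf(x)=-v_p(\lcm(b,d))$. The organizational difference is that you treat split and inert primes uniformly via the single identity $x-\sigma x=\frac{c}{d}(\omega-\sigma\omega)$ together with $v_\pf(\omega-\sigma\omega)=0$ for $p\nmid D$, whereas the paper handles them in separate case analyses: for inert primes it argues that $ad_0+cb_0\omega\not\equiv 0\pmod p$ because the image of $\omega$ in $\Ocal_K/p\Ocal_K$ has degree two over $\FF_p$, and only in the split case does it invoke the conjugate difference $cb_0(\omega-\omega^\sigma)\not\equiv 0\pmod{\pf}$. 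Your observation that $v_\pf(\sigma x)=v_{\sigma\pf}(x)$ lets the same conjugate trick absorb the inert case (where $\sigma\pf=\pf$) and the split case (where $\sigma\pf$ is the companion prime) at once, which buys a shorter proof with fewer sub-cases; the paper's version, in exchange, is slightly more explicit about what happens at each type of prime.
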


The statement above does \emph{not} hold locally.  That is, there might be valuations $v$ on $\Ocal_K$ such that $v(\frac{a}{b}+\omega\frac{c}{d})<0$ is even, but such that $v(\lcm(b, d))$ is odd.  For example, if $D=-4$ and $v$ is the valuation corresponding to the prime ideal $(2+\omega)\Ocal_K$, then $\frac{2}{125}+\omega\frac{1}{125}$ has the property that $v(\lcm(b, d))=v(5^3)=3$ is odd,  but $v(\mathcal{D}(\frac{2}{125}+\omega\frac{1}{125}))=2$.  Whenever this is the case,  there must be some other valuation $v'$ such that $v'(\frac{a}{b}+\omega\frac{c}{d})<0$ is odd (in the example, we may take $v'$ to be the valuation corresponding to the other divisor of 5).

\begin{proof}
Let $v$ be a valuation on $\Ocal_K$.  First, suppose that $v$ ramifies in $\Ocal_K/\ZZ$.  Then $v(\lcm(b, d))$ is automatically even.

Now, suppose that $v$ is inert in this extension.  Then we certainly have $v(\omega)=0$.
Suppose that $v(\lcm(b, d))$ is odd.  In this case, if $v(\frac{a}{b}+\omega\frac{c}{d})<0$ is even, then either $v(b)>0$ or $v(d)>0$.  If exactly one of these is positive, then $v(ad+bc\omega)=0$, and so
\[v\left(\frac{a}{b}+\omega\frac{c}{d}\right)=-v(bd)=-v(\lcm(b, d)).\]
The claim follows.  Now suppose that $v(b)>0$ and $v(d)>0$.  In this case,
\[v\left(\frac{a}{b}+\omega\frac{c}{d}\right)=\min(v(a/b), v(c\omega/ d))=-v(\lcm(b, d))\]
\emph{unless} $v(b)=v(d)$.  So we are left with the case $v(d)=v(b)$.  If $p$ is the unique rational prime with $v(p)=1$, then $d=p^ed_0$ and $b=p^eb_0$, for some $d_0$ and $b_0$ prime to $p$, and $e=v(d)=v(b)$.  We claim that $v(ad_0+cb_0\omega)=0$.  If not, then $\omega\equiv -\frac{ad_0}{cb_0}\MOD{p}$.  But if this were the case, then the image of $\omega$ in $\Ocal_K/p\Ocal_K$ would be an element of degree one over the prime field $\ZZ/p\ZZ\subseteq\Ocal_K/p\Ocal_K$, which is clearly a contradiction, since $\Ocal_K=\ZZ[\omega]$.  It must be that $v(ad_0+cb_0\omega)=0$, whence
\[v\left(\frac{a}{b}+\omega\frac{c}{d}\right)=e-v(bd)=-e=-v(\lcm(b, d)).\]

Finally, suppose that $v$ is split,  let $\pf\subseteq\Ocal_K$ be the corresponding prime, let $p$ be the rational prime below $\pf$, let $\sigma$ generate $\Gal(K/\QQ)$, and let $v'$ be the valuation associated to $\pf^\sigma$.
Suppose that $v(b)>v(d)$, and that $v(\lcm(b, d))$ is odd.  Then we certainly have $v(cb\omega)>v(d)=v(ad)$, and so
\[v(ad+bc\omega)=v(d).\]
It follows that
\[v\left(\frac{a}{b}+\omega\frac{c}{d}\right)=v(d)-v(bd)=-v(\lcm(b, d)).\]
So we may suppose that $v(b)\leq v(d)$.  In particular, if $e=v(b)$, we may write $b=p^eb_0$, $d=p^ed_0$, with $b_0, d_0\in\ZZ$, and $b_0$ prime to $p$.  It cannot be the case that
\[\min(v(ad_0+cb_0\omega), v'(ad_0+cb_0\omega))>0.\]
If this were the case, then we would have
\[ad_0+cb_0\omega\equiv 0\MOD{\pf}\text{ and }ad_0+cb_0\omega^\sigma\equiv 0\MOD{\pf}.\]
From this, it would follow that $cb_0(\omega-\omega^\sigma)\equiv 0\MOD{\pf}$, which is impossible since $p\nmid cb_0$, and since $p$ is not ramified.  Thus we have either
\[v(ad_0+cb_0\omega)=0\text{ or }v'(ad_0+cb_0\omega)=0,\]
whereupon either
\[v\left(\frac{a}{b}+\omega\frac{c}{b}\right)=v(ad_0+cb_0\omega)+v(\gcd(b, d))-v(bd)=-v(\lcm(b, d)),\]
or the same statement for $v'$.  Either way, if $v(\lcm(b, d))$ is odd, then some valuation on $K$ is negative and odd at $\frac{a}{b}+\frac{c}{d}\omega$.
\end{proof}

We may now describe the algorithm.
\begin{enumerate}
\item Choose a small initial set $S$ of primes and, for each $S$-type $T$, compute $\PosPer(T)$.
\item Construct a list $\mathfrak{S}$ of all $S$-types $T$ such that $\PosPer(T)\not\subseteq\{1, ..., M\}$, for the conjectured value $M=M([K:\QQ])$.
\item Loop over $c\in K$ with $H(c)\leq B$, writing $c$ as $\frac{\alpha}{\beta}+\frac{\gamma}{\delta}\omega$ (taking $\omega=0$ if $K=\QQ$):
\begin{enumerate}
\item if $\lcm(\beta, \delta)$ is not a square ideal in $\Ocal_K$ then we know that $z^2+c$ has no $K_v$-rational periodic points for some completion $K_v$ of $K$: the conjecture holds trivially.
\item compute the $S$-type $T(c)$: if $T(c) \not\in \mathfrak{S}$, then we know that the conjecture holds for $c$.
\item if $T(c)\in \mathfrak{S}$ then we may refine our data for this value of $c$ by computing the set of possible periods modulo additional primes of good reduction.
\item if after a suitably large number of additional primes $\PosPer(c) \not\subseteq \{1,\ldots,M\}$, then the case is left to be treated manually.
\end{enumerate}
\end{enumerate}

In practice, the two initial simplifying steps take care of the vast majority of cases.  For example, the number of $c\in \QQ$ with $H(c)\leq B$ is roughly $2B^2$, but once the $c$ with non-square denominators are eliminated, only $2B^{3/2}$ of these need to be checked.

 Pre-computing the data for several primes improves things even more.  In turns out that for most congruence classes modulo a given prime, there are local conditions ensuring the truth of Poonen's Conjecture.  For example, over $\QQ$, there are 3 distinct $\{2\}$-types, namely $(0)$, $(1)$, and $(\infty)$.  It is fairly easy to see that Conjecture~1 holds if $c\equiv 0\MOD{2}$ or $c\equiv 1\MOD{2}$, since $z\mapsto z^2+1$ has a single, superattracting 2-cycle over $\ZZ/2\ZZ$, and $z\mapsto z^2$ has two superattracting fixed points\footnote{These cycles should probably be called ``wildly superattracting'', since the derivative of the map vanishes at every point.}.
 So, already our precomputation has eliminated two-thirds of possible $c\in\QQ$.
The data below shows the proportion of types that need to be considered if one pre-computes data for the first $N$ primes over $\QQ$ (recall that $\mathfrak{S}$ is the set of types for which $\PosPer\not\subseteq\{1, 2, 3\}$, i.e., those congruence classes which require extra computation to verify the conjecture).
\begin{center}
\begin{tabular}{|c|r|r|c|}\hline
$N$ & $\# \mathfrak{S}$ & number of types & proportion\\\hline\hline
1 & 1 & 3 & 0.33333\\\hline
2 & 2 & 12 & 0.16667\\\hline
3 & 5  & 72 & 0.06944\\\hline
4 & 13 & 576 & 0.02257\\\hline
5 & 40 & 6912 & 0.00579\\\hline
6 & 98 & 96768 & 0.00101\\\hline
7 & 199 & 1741824 & 0.00011\\\hline
8 & 862 & 34836480 & $2.4744\times 10^{-5}$\\\hline
9 & 1699 & 836075520 & $2.0321 \times 10^{-6}$\\\hline
10 & 4893 & 25082265600 & $1.9508\times 10^{-7}$\\\hline
%11 & 28397 & 802632499200 & $3.5380\times 10^{-8}$\\\hline
\end{tabular}
\end{center}

There is a similar phenomenon for each quadratic extension $K/\QQ$ but the numerics are not included here as the proportions vary for each field.

\section{Other computational issues}
    While computing $\mathfrak{S}$ for many primes reduces the proportion of types left to check, the more primes used does not necessarily lead to faster execution.  The main impediment is that for each $c$ value whose denominator is a square you still must compute the $S$-type (and hash value) for that $c$ value.  The more primes in $S$, the more operations you must perform for every $c$ value.  Since most $c$ values have an $S$-type not in $\mathfrak{S}$, the number of extra operations becomes a significant factor.  For example, we found that $\#S=5$ to be the most efficient over $\QQ$ for the authors' implementation of the algorithm.

    However, even computing $\PosPer(c)$ for infinitely many primes may not give you the precise list of exact periods for periodic points for that $c$ value.
    \begin{example}
        For $K/\QQ$ with discriminant $-3$, and $c = 1/4(\omega + 1)$, where $\omega = \frac{1 + \sqrt{-3}}{2}$.  The point $\omega/2$ is a fixed point with multiplier $\omega$.  Hence, in the notation of Lemma \ref{periods}, for all primes of good reduction we have $m=1$ and $r=6$.  Therefore, for any set of primes $S$ we have $\{1,6\} \subseteq \PosPer(c)$, yet there are no periodic points with exact period $6$ for $\phi_c$.
    \end{example}

    It is interesting to note that no $c$ values were found to have a periodic point with exact period $5$. The only point found with exact period $6$ was previously known \cite[Table 6]{fps}.
    \begin{remark}
        Since the curves parameterizing $(x,c)$ where $x$ is a periodic point with exact $5$ or $6$ for $\phi_c$ both have genus greater than $1$ \cite{fps}, we know by Faltings' Theorem that there are at most finitely many $c$ values which admit a periodic point of exact period $5$ or $6$ over $K/\QQ$ with $[K:\QQ] \leq 2$.  It would be interesting to determine the complete set of periodic points with exact period $5$ or $6$ over $K$, in particular, whether or not there are any $c$ values which admit a periodic point of exact period $5$ over $K$.
    \end{remark}

    The code was written in $C$ and run on the Amherst College computing cluster.  Sample code is available at \url{http://www.amherst.edu/~bhutz/Research.html}.


\begin{thebibliography}{9}

%\bibitem{faltings} G.~Faltings.  Endlichkeitss\"{a}tze f\"{u}r abelsche Variet\"{a}ten \"{u}ber Zahlk\"{o}rpen.  \emph{Invent.\ Math.}, \textbf{73} (1983), pp.~349-366.

\bibitem{fps} E.V.~Flynn, B.~Poonen, and E.~Schaefer. Cycles of quadratic polynomials and rational points on a genus 2 curve. \emph{Duke Math.\ J.}, \textbf{90} (1997), pp.~435-463.


\bibitem{kamienny} S.~Kamienny. Torsion points on elliptic curves, \emph{Bull.\ Amer.\ Math.\ Soc.\ (N.S.)} \textbf{23} (1990), pp.~371-373.

\bibitem{manes} M.~Manes. $\mathbb{Q}$-rational cycles for degree-2 rational maps having an automorphism. \emph{Proc. London Math. Soc.}, 96:669--696, 2008.

\bibitem{mazur} B.~Mazur. Modular curves and the Eisenstein ideal, \emph{Inst.\ Hautes \'{E}tudes Sci.\ Publ.\ Math.} \textbf{47} (1977), pp.~33-186.

\bibitem{merel} L.~Merel. Bornes pour la torsion des courbes elliptiques sur les corps de nombres, \emph{Invent.\ Math.} \textbf{124} (1996), pp.~437-449.

\bibitem{morton} P.~Morton.  Arithmetic properties of periodic points of quadratic maps, II, \emph{Acta Arith.} \textbf{87} (1998), pp.~89-102.

\bibitem{morton-silverman} P.~Morton and J.~Silverman. Periodic Points, multiplicities, and dynamical units, \emph{J.\ Reine Angew.\ Math.} (1995) \textbf{461}, pp.~81-122.

\bibitem{poonen} B.~Poonen. The classification of rational preperiodic points of quadratic polynomials over $\mathbb{Q}$:\ a refined conjecture, \emph{Math.\ Z.} \textbf{228} (1998), pp.~11-29.

\bibitem{ads} J.~H.~Silverman. \emph{The arithmetic of dynamical systems}, volume 241 of \emph{Graduate texts in mathematics}.  Springer-Verlag, 2007.

\bibitem{stoll} M.~Stoll, Rational 6-cycles under iteration of quadratic polynomials, \emph{London Math. Soc. J. Comput. Math.}, \textbf{11} (2008), pp.~367-380.

\bibitem{walde} R.~Walde and P.~Russo.  Rational periodic points of the quadratic function $Q_c=x^2+c$  \emph{The Amer.\ Math.\  Monthly} \textbf{101} (1994), pp.~318-331.

\end{thebibliography}
\end{document}